\newtheorem{thm}{Theorem}[section]
\newtheorem{prop}[thm]{Proposition}
\newtheorem{lem}[thm]{Lemma}
\theoremstyle{remark}
\theoremstyle{definition}
\newtheorem{ex}[thm]{Example}
\numberwithin{equation}{section}
\numberwithin{thm}{section}
\numberwithin{equation}{section}
\numberwithin{thm}{section}
\newcommand{\tr}{\mathrm{\,trace}\,}
\newcommand{\spa}{\mathrm{span\,}}
\newcommand{\sech}{\mathrm{sech\,}}
\title[Surfaces with positive relative nullity]{Quasi--minimal surfaces of pseudo--Riemannian space forms 
with positive relative nullity}
\author{Burcu Bekta\c s Demirci}
\address{Fatih Sultan Mehmet Vak\i f University, 
Faculty of Engineering, Department of Civil Engineering, 
Beyo\u{g}lu, Istanbul, Turkey, }
\email{bbektas@fsm.edu.tr (Corresponding Author)}
\author{Nurettin Cenk Turgay}
\address{Istanbul Technical University, Faculty of Science and Letters, Department of Mathematics, 34469 Maslak, Istanbul, Turkey}
\email{turgayn@itu.edu.tr}
\begin{document}

\begin{abstract} 
In this paper, we consider surfaces in 4--dimensional pseudo--Riemannian 
space--forms with index 2. First, we obtain some of geometrical properties of such surfaces considering  their relative null space. 
Then, we get classifications of quasi--minimal surfaces 
with positive relative nullity.
\end{abstract}

\subjclass[2010]{53B25, 53C40, 53C42}
\keywords{Quasi--minimal surface, positive relative nullity, 
pseudo--Riemannian space forms}

\newenvironment{nouppercase}{
\let\uppercase\relax
\renewcommand{\uppercasenonmath}[1]{}}{}
\begin{nouppercase}
\maketitle
\end{nouppercase}

\section{Introduction}
A submanifolds of a pseudo--Riemannian manifold is said to be \textit{quasi--minimal} if its mean curvature is light--like at every point. Since quasi--minimal submanifolds does not exist in Riemannian manifolds, they have taken attention of many geometers so far (See, for example, \cite{ChenE42Flat,ChenE42PMCV,Ganchev, HaesenOrtega}). When the ambient space is  a Lorentzian space--time, quasi--minimal submanifolds are also called \textit{marginally trapped} in physics literature because they are closely related  with the concept of trapped surfaces, introduced by Roger Penrose in \cite{Penrose65}.

On the other hand, studying  submanifolds by considering their relative null space was initiated by    M. Dajczer and D. Gromoll in \cite{DajczerGro} where they obtained necessary and sufficent conditions for a spherical submanifold to
have positive relative nullity. Recently, the complete classification of 	marginally trapped surfaces 
with positive relative nullity in Lorentzian space--forms was given by B.-Y.Chen and J. Van der Veken in \cite{ChenVeken1}. Further, they proved that there exists no quasi--minimal surface with positive relative nullity  when the ambient space is  a Robertson--Walker space--time with non--constant sectional curvatures, \cite{ChenVeken2}. 

In \cite{Chen}, B.-Y.Chen mentioned some results concerning marginally trapped surfaces
in Lorentzian space forms and in Lorentzian complex space forms
and he also put forward some open problems 
about classification of such surfaces 
in a $4$--dimensional pseudo--Riemannian space forms with index $2$.

The main purpose of this paper is studying quasi--minimal surfaces of 
a $4$--dimensional pseudo--Riemannian space forms with index $2$ 
from in terms of their relative null spaces. In particular, we obtain the complete local classification of quasi--minimal surfaces  
in the pseudo--Euclidean space $\mathbb{E}^4_2$ and  
a pseudo--sphere $\mathbb{S}^4_2$, respectively. In Sect. 2, after we describe the notation that we will use, we give basic facts on quasi--minimal surfaces of a pseudo--Riemannian space forms. 
In Sect. 3, we present our main results.

\section{Preliminaries}
Let $\mathbb E^n_s$ be the pseudo--Euclidean $n$--space 
defined by 
$\mathbb E^n_s=(\mathbb R^n, \hat{g})$, 
where $\hat{g}$
is the canonical metric tensor of index $s$ given by 
$$\hat{g}=\langle\cdot,\cdot\rangle=-\sum\limits_{i=1}^sdx_i\otimes dx_i+\sum\limits_{j=s+1}^ndx_j\otimes dx_j$$
for a Cartesian coordinate system $(x_1,x_2,\hdots,x_n)$ 
of $\mathbb R^n$. 
A non--zero vector $v$ is said to be 
space--like, light--like or time--like 
if $\langle v,v\rangle>0$, $\langle v,v\rangle=0$ 
or $\langle v,v\rangle<0$, respectively.  

We put 
\begin{align*}
\begin{split}
\mathbb S^n_s=&\{{\bf x}\in \mathbb E^{n+1}_s\;|\;
\langle{\bf x},{\bf x}\rangle=1\},\\
\mathbb H^n_s=&\{{\bf x}\in \mathbb E^{n+1}_{s+1}\;|\;
\langle{\bf x},{\bf x}\rangle=-1\}.\\
\end{split}
\end{align*} 
Then, $\mathbb{S}^n_s$ and $\mathbb{H}^n_s$ are pseudo--Riemannian manifolds
of constant sectional curvature $1$ and $-1$ known as a pseudo--sphere and a pseudo--hyper\-bo\-lic space, respectively. 
For a non--zero real number $c>0$, 
we also denote a $n$--dimensional pseudo--Riemannian space form with 
index $s$ and constant sectional curvature $c$ by 
$R^n_s(c)$. It is known that  
 $$R^n_s(c)=\left\{
\begin{array}{cl}
\mathbb E^n_s &\mbox{if $c=0$,} \\
\mathbb S^n_s &\mbox{if $c =1$,}\\
\mathbb H^n_s &\mbox{if $c=-1$.}
\end{array}
\right.
$$ 
Let $\widetilde\nabla$ and $\tilde g$ stand for the Levi--Civita connection 
and the metric tensor of  $R^n_s(c)$, respectively.

\subsection {Pseudo--Riemannian Submanifolds of $R^n_s(c)$}
Consider an isometric immersion  
$f: (\Omega,\check{g})\hookrightarrow  R^n_s(c)$
from an $m$--dimensional pseudo--Riemannian manifold $(\Omega,\check{g})$ and put $M=f(\Omega)$ with the metric $g=f^*(\check{g})$. 
If $\nabla$ denote the Levi--Civita connection of $M$, then for any vector fields 
$X,\ Y\in TM$ and $\xi\in N^f\Omega$, 
the Gauss and Weingarten formulas are given, respectively, by 
\begin{eqnarray}
\label{MEtomGauss} 
\widetilde\nabla_X Y&=& \nabla_X Y + \alpha_f(X,Y),\\
\label{MEtomWeingarten} 
\widetilde\nabla_X \xi&=& -A^f_\xi(X)+\nabla^\perp_X \xi,
\end{eqnarray}
where $N^f\Omega$  stand for the  normal bundle of $f$, $\alpha_f$ is the second fundamental form, 
$A^f_\xi$ is the shape operator 
along the normal direction $\xi$
and $\nabla^\perp$ is the normal connection of $f$. 
Also, $A^f_\xi$ and $\alpha_f$  are related by
\begin{eqnarray}
\label{MinkAhhRelatedby} 
g(A^f_\xi X,Y)=\tilde{g}(\alpha_f(X,Y),\xi).
\end{eqnarray}
The mean curvature vector $H$ of $f$ is defined by 
$H=\frac{1}{m}\tr\;\alpha_f$. 
Note that  $M$ is called quasi--minimal 
if the mean curvature vector $H$ is a light--like at each point of $M$. 

On the other hand, the curvature tensor $R$ of $M$, the normal curvature tensor $R^\perp$ of 
$f$ and  $\alpha_f$ satisfy 
\begin{subequations}
\begin{eqnarray}
\label{MinkGaussEquation} R(X,Y)Z&=&
c(X\wedge Y)Z+A^f_{ \alpha_f(Y,Z)}X-A^f_{\alpha_f(X,Z)}Y,\\
\label{MinkCodazzi} (\bar \nabla_X \alpha_f )(Y,Z)&=&(\bar \nabla_Y \alpha_f )(X,Z),\\
\label{MinkRicciEquation} R^{\perp}(X,Y)\xi&=&\alpha_f(X,A^f_\xi Y)-\alpha_f(A^f_\xi X,Y),
\end{eqnarray}
\end{subequations}
which are called Gauss, Codazzi and Ricci equations, respectively, where  $X\wedge Y$ and $\bar \nabla \alpha_f$ are defined
respectively by 
\begin{eqnarray*}
(X\wedge Y)Z&=&g(Y,Z)X-g(X,Z)Y,\\
(\bar \nabla_X \alpha_f)(Y,Z)&=&\nabla^\perp_X \alpha_f(Y,Z)-\alpha_f(\nabla_X Y,Z)-\alpha_f(Y,\nabla_X Z).
\end{eqnarray*}

The relative null space of $M$ at a point $p$ is defined by 
$$\mathcal{N}_p=\{X_p\in T_pM\;|\;\alpha_f(X_p,Y_p)=0 
\mbox{ for all $Y_p\in T_pM$ }\}.$$
If the dimension of the relative null space $\mathcal{N}_p$ is non-zero for all $p\in M$, then
$M$ is said to have positive relative nullity in $R^n_s(c)$, \cite{ChenVeken1}.

\subsection {Immersions into $\mathbb{S}^4_2$}
Let $f:(\Omega,\check{g})\hookrightarrow\mathbb{S}^4_2$ be an isometric immersion and $i:\mathbb S^4_2\subset\mathbb E^5_2$ 
be the inclusion. Call $\hat f=i\circ f$. 
We denote the Levi--Civita connection of $\mathbb E^5_2$ by 
$\hat{\nabla}$. Then, we have 
$$N^{\hat f}\Omega=i_*\left(N^f\Omega\right)\oplus \mathrm{span\,}\{\hat f\}$$
and shape operators of $f$ and $\hat f$ satisfy
$
A^f_\eta=A^{\hat f}_{i_*\eta}\mbox{whenever $\eta\in N^f\Omega$}$
and 
$
A^{\hat f}_{\hat f}=-\mathrm{I},
$
where $\mathrm{I}$ is the identity.  
Moreover, the second fundamental forms of $f$ and $\hat f$ 
are related by
\begin{equation}
\nonumber
\alpha_{\hat f}(X,Y)= i_*\left(\alpha_{f}(X,Y)\right)-     g (X,Y)\hat f
\end{equation}
whenever $X,Y\in TM$.
Therefore, we have
\begin{equation}
\label{nablaSrelatedby}
\hat\nabla_XY=i_*\left(\widetilde\nabla_XY\right)-g(X,Y)\hat f.
\end{equation}

\section{Surfaces with positive relative nullity in $R^4_2(c)$}
In this section, we obtain complete local classification of quasi--minimal surface with positive relative nullity in a 
$4$--di\-men\-sio\-nal pseudo--Riemannian space forms.
Throughout this section, 
a surface $M$ of $R^4_2(c)$ is defined by $M=f(\Omega)$ and we put $g=f^*(\check g)$
where $f:(\Omega,\check{g})\hookrightarrow(R^4_2(c),\tilde{g})$ is an isometric immersion and $c\in\{-1,0,1\}$.

\begin{lem}
\label{lemma1}
Let $M$ be a quasi--minimal surface
with positive relative nullity in a pseudo--Riemannian space form 
$R^4_2(c)$.  
Then, at each point $p\in M$, 
there exists an orthonormal basis $\{e_1,e_2\}$ 
for the tangent space of $M$
and a pseudo--orthonormal basis $\{e_3,e_4\}$ for  
the normal space of $M$ such that 
\begin{align}
\label{chosenbase}
\begin{split}
g(e_1,e_1) =-g(e_2,e_2)=\varepsilon, \quad &g(e_1,e_2)=0,\\
\tilde{g}(e_3,e_3)=\tilde{g}(e_4,e_4)=0,\quad &\tilde{g}(e_3,e_4)=-1,\\
\alpha_f(e_1,e_1)=\alpha_f(e_1,e_2)=0, \quad&\alpha_f(e_2,e_2)=e_3.
\end{split}
\end{align} 
\end{lem}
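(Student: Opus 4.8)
The plan is to start from the hypotheses and peel off the structure they force on the second fundamental form. Since $M$ is a surface in a pseudo-Riemannian $4$-space of index $2$, and since it has positive relative nullity, the relative null space $\mathcal N_p$ is a nonzero subspace of $T_pM$. I would first argue that $\dim\mathcal N_p = 1$: if $\dim\mathcal N_p = 2$ then $\alpha_f \equiv 0$ at $p$, forcing $H(p)=0$, which contradicts quasi-minimality (the mean curvature must be light-like, in particular nonzero). Next I would show that the null direction $e_1 \in \mathcal N_p$ cannot itself be a light-like tangent vector: the tangent plane $T_pM$ carries the induced metric $g$, and one has to check that a null tangent line would be incompatible with quasi-minimality. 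Concretely, if $T_pM$ were degenerate the analysis changes, so one shows $g|_{T_pM}$ is nondegenerate of signature $(1,1)$; then $\mathcal N_p$, being $1$-dimensional, is spanned either by a space-like or a time-like vector, and in either case we may pick a unit vector $e_1$ spanning it and a unit vector $e_2$ orthogonal to it with the opposite causal character, giving the first line of \eqref{chosenbase} with $\varepsilon = \pm 1$.

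Having fixed $\{e_1,e_2\}$ with $e_1 \in \mathcal N_p$, the definition of the relative null space immediately gives $\alpha_f(e_1,e_1) = \alpha_f(e_1,e_2) = 0$, so $\alpha_f$ is completely determined by the single normal vector $\alpha_f(e_2,e_2)$. The mean curvature vector is then $H = \tfrac12 \tr \alpha_f = \tfrac12\big(g(e_1,e_1)\alpha_f(e_1,e_1) + g(e_2,e_2)\alpha_f(e_2,e_2)\big) = -\tfrac{\varepsilon}{2}\,\alpha_f(e_2,e_2)$, so quasi-minimality forces $\alpha_f(e_2,e_2)$ to be a nonzero light-like normal vector. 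Now I exploit the fact that the normal space $N^f_p\Omega$ is a $2$-dimensional pseudo-Euclidean space of index $1$ (since the ambient index is $2$ and the tangent plane absorbs one minus sign): such a plane admits a pseudo-orthonormal basis of two light-like vectors with mutual inner product $-1$. I would choose $e_3$ to be precisely $\alpha_f(e_2,e_2)$ — which is already light-like and nonzero — and then choose $e_4$ to be the unique light-like normal vector with $\tilde g(e_3,e_4) = -1$. This yields the second and third lines of \eqref{chosenbase} simultaneously, with $\alpha_f(e_2,e_2) = e_3$ exactly.

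The one genuinely delicate point is the causal-character bookkeeping in the two middle steps: verifying that the induced metric on $T_pM$ is nondegenerate of signature $(1,1)$ (so that $\varepsilon$ is well-defined and the first line makes sense) and that the induced metric on $N^f_p\Omega$ has index $1$ (so that a pseudo-orthonormal null basis of the stated form exists). Both follow from the index of the ambient space being $2$ together with the orthogonal splitting $T_p R^4_2(c) = T_pM \oplus N^f_p\Omega$, but one must rule out the degenerate-plane cases. Here quasi-minimality does the work: if $T_pM$ were degenerate then, after a short computation, $H(p)$ would be forced to be either zero or space-like, contradicting the hypothesis; I expect this case-exclusion to be the main obstacle, and it is the only place where the argument is more than bookkeeping. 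Everything else — the dimension count for $\mathcal N_p$, the vanishing relations for $\alpha_f$, and the explicit choice of the normal frame — is then immediate from the definitions and \eqref{MinkAhhRelatedby}.
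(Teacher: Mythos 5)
Your overall architecture matches the paper's proof: rule out $\dim\mathcal N_p=2$ via $H_p=0$, take a unit $e_1$ spanning $\mathcal N_p$, compute $H=-\tfrac{\varepsilon}{2}\,\alpha_f(e_2,e_2)$, and set $e_3=\alpha_f(e_2,e_2)$ (the paper writes this as $e_3=-2\varepsilon H$) with $e_4$ the unique null normal satisfying $\tilde g(e_3,e_4)=-1$. But there is a genuine gap at the one step you yourself flag as delicate, and you resolve it with the wrong dichotomy. Knowing that $g|_{T_pM}$ is nondegenerate of signature $(1,1)$ does \emph{not} imply that the line $\mathcal N_p$ is spanned by a space-like or time-like vector: a nondegenerate Lorentzian plane contains two null directions, and a priori $\mathcal N_p$ could be one of them. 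Your sentence ``one shows $g|_{T_pM}$ is nondegenerate of signature $(1,1)$; then $\mathcal N_p$, being $1$-dimensional, is spanned either by a space-like or a time-like vector'' is therefore a non sequitur, and the case it skips is exactly the one that needs quasi-minimality. The paper's argument is: if $\mathcal N_p=\spa\{X_p\}$ with $X_p$ light-like, let $Y_p$ be the unique light-like tangent vector with $g_p(X_p,Y_p)=-1$; computing the trace of $\alpha_f$ in this null frame gives $H_p=-\alpha_f(X_p,Y_p)$, which vanishes because $X_p$ lies in the relative null space, contradicting the light-likeness of $H$. That two-line computation is the missing idea.

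Relatedly, the issue you identify as ``the main obstacle'' --- ruling out a degenerate tangent plane --- is not actually an obstacle in this setting: the immersion is by hypothesis from a pseudo-Riemannian manifold $(\Omega,\check g)$, so the induced metric is nondegenerate by definition. What that nondegeneracy buys you (together with the ambient index being $2$ and the existence of the nonzero null normal vector $H$) is that the normal plane has index $1$ and hence the tangent plane is Lorentzian, which justifies $g(e_2,e_2)=-\varepsilon$ and the existence of the pseudo-orthonormal normal frame. That part of your bookkeeping is fine; the real work is the exclusion of a null $\mathcal N_p$ inside the Lorentzian tangent plane, which your proposal asserts but does not prove.
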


\begin{proof}
Assume that $M$ is a quasi--minimal surface with positive relative nullity in $R^4_2(c)$. 
If $\dim\mathcal N_p=2$  for a $p\in M$, 
then $\alpha_f$ vanishes at $p$ which implies $H_p=0$. 
However, this is a contradiction because $M$ is quasi--minimal. 
Therefore, we  have 
$$\dim\mathcal N_p=1 \quad \mbox{for all $p\in M$}.$$ 

On the other hand, if $\mathcal N_p$ is degenerated, 
i.e., $\mathcal N_p=\spa\{X_p\}$ 
for a light--like vector $X_p\in T_pM$, 
then we have $\alpha_f(X_p,Y_p)=0$ 
for any  $Y_p\in T_pM$. 
If $Y_p$ is chosen to be a unique light--like tangent vector at $p$ 
such that $g_p(X_p,Y_p)=-1$,
then we obtain $H_p=-\alpha_f(X_p,Y_p)=0$ 
which yields another contradiction.  
Consequently, there exists a tangent vector field $e_1\in \mathcal N_p$ 
with $g(e_1,e_1)=\varepsilon\in\{-1,1\}$. Let $e_2$  be a unit vector field orthogonal to $e_1$, which implies 
$g(e_2,e_2)=-\varepsilon,\  g(e_1,e_2)=0$. Since $e_1\in \mathcal N_p$, we have $\alpha_f(e_1,e_1)=\alpha_f(e_1,e_2)=0$ and 
$\dim\mathcal N_p=1$ implies $\alpha_f(e_2,e_2)\neq 0$. Now, we define a light--like vector field $e_3$ by
$$e_3=-2\varepsilon H$$
and choose $e_4$ as the unique light--like vector field normal to $M$ 
such that $\tilde g(e_3,e_4)=-1$. Then,
 we obtain $\alpha_f(e_2,e_2)=e_3.$  Hence, we have obtained all of  conditions appearing in \eqref{chosenbase}.    
\end{proof}

Let us assume that $\{e_1, e_2\}$ is an orthonormal frame and 
$\{e_3, e_4\}$ is a pseudo--orthonormal frame on the 
quasi--minimal surface $M$ in $R^4_2(c)$ 
which satisfy
the equation \eqref{chosenbase}. 
With respect to chosen frame field $\{e_1,e_2,e_3,e_4\}$, 
we have 
\begin{align}
\label{levicon_1_R42}
\nabla_{e_i}e_1&=-\varepsilon\omega_{12}(e_i)e_2,\;\;\; \nabla_{e_i}e_2=-\varepsilon\omega_{12}(e_i)e_1\\
\label{levicon_2_R42}
\nabla^{\perp}_{e_i}e_3&=\phi(e_i)e_3,\;\;\;
\nabla^{\perp}_{e_i}e_4=-\phi(e_i)e_4.
\end{align} 
From now on, we denote 
$\phi(e_1)=\omega$ and $\phi(e_2)=\gamma$.

\begin{prop}
\label{SurfR42cProp1}
Let $M$ be a quasi--minimal surface 
of a pseudo--Riemannian space forms $R^4_2(c)$
with positive relative nullity. 
Then, there exists a local coordinate system $(s,t)$ 
defined on a neighborhood of $p\in M$ 
such that the induced metric tensor $g$ of $M$ 
takes the  form
\begin{equation}
\label{SurfR42cProp1Eq1}
g=\varepsilon(ds\otimes ds-\phi^2dt\otimes dt), \qquad \varepsilon=\pm 1.
\end{equation}
Moreover, the vector fields $e_1=\frac{\partial}{\partial s}$ and
$e_2=\frac 1\phi\frac{\partial}{\partial t}$
satisfy
\begin{subequations}\label{SurfR42cProp1Eq2ALL}
\begin{eqnarray}
\label{SurfR42cProp1Eq2a}\widetilde\nabla_{e_1}e_1=0,&\quad&\widetilde\nabla_{e_1}e_2=0\\
\label{SurfR42cProp1Eq2b}\widetilde\nabla_{e_2}e_1=-\omega e_2,&\quad&\widetilde\nabla_{e_2}e_2=-\omega e_1+e_3,\\
\label{SurfR42cProp1Eq2c}\widetilde\nabla_{e_1}e_3=\omega e_3,&\quad&\widetilde\nabla_{e_2}e_3=\gamma e_3, 
\end{eqnarray}
\end{subequations}
%where $e_3$ is proportional to the mean curvature vector field of $M$ and 
where the functions $\phi,\omega$ and $\gamma$ are defined 
by one of following forms:
\begin{itemize}
\item[i.] For $c=0$, 
\begin{equation}
\label{SurfR42cProp1Eq3ALL_1}
\left\{\begin{array}{l}
\phi(s,t)=A(t)(s+m(t))\\
\displaystyle\omega(s,t)=-\frac{1}{s+m(t)}\\
\displaystyle\gamma(s,t)=\frac{\gamma_0(t)}{s+m(t)}-
\frac{m'(t)}{(A(t)(s+m(t)))^2}
\end{array}
\right.
\end{equation}
\item[ii.] For $\varepsilon c=1$, 
\begin{align}
\label{SurfR42cProp1Eq3ALL_2}
\left\{\begin{array}{l}
\phi(s,t)=A(t)\cos{(s+m(t))}\\
\omega(s,t)=\tan{(s+m(t))}\\
\displaystyle\gamma(s,t)=\sec{(s+m(t))}
\left(\gamma_0(t)+\tan{(s+m(t))}
\frac{m'(t)}{A(t)}\right)
\end{array}
\right.
\end{align}
\item[iii.] For $\varepsilon c=-1$, 
\end{itemize}
\begin{align}\label{SurfR42cProp1Eq3ALL_3}
\left\{\begin{array}{l}
\phi(s,t)=A(t)\cosh{(s+m(t))}\\
\omega(s,t)=-\tanh{(s+m(t))}\\
\displaystyle\gamma(s,t)=\sech{(s+m(t))}
\left(\gamma_0(t)-\tanh{(s+m(t))}
\frac{m'(t)}{A(t)}\right)
\end{array}
\right.
\end{align}
for some smooth functions $m$ and $\gamma_0$, where $A$ is  an arbitrarily chosen positive, smooth,  non--vanishing function.
\end{prop}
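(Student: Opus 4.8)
The plan is to read off as much as possible from the frame of Lemma~\ref{lemma1} together with the Gauss, Codazzi and Ricci equations, and then to integrate the resulting ordinary differential equations in a chart adapted to the relative null direction.

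First I would prove the connection relations \eqref{SurfR42cProp1Eq2ALL} at the level of the frame $\{e_1,e_2,e_3,e_4\}$. Starting from \eqref{chosenbase} and the structure equations \eqref{levicon_1_R42}--\eqref{levicon_2_R42}, I would substitute the special form of $\alpha_f$ into the Codazzi equation \eqref{MinkCodazzi}: evaluated on $(e_1,e_1,e_2)$ it gives $\omega_{12}(e_1)=0$, and evaluated on $(e_1,e_2,e_2)$ it gives $\omega_{12}(e_2)=\varepsilon\omega$. Next, \eqref{MinkAhhRelatedby} together with $\alpha_f(e_1,e_1)=\alpha_f(e_1,e_2)=0$ and $\tilde g(\alpha_f(e_2,e_2),e_3)=\tilde g(e_3,e_3)=0$ shows that every component of $A^f_{e_3}$ vanishes, i.e.\ $A^f_{e_3}=0$. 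Plugging these facts, together with $\nabla^\perp_{e_1}e_3=\omega e_3$ and $\nabla^\perp_{e_2}e_3=\gamma e_3$, into the Gauss and Weingarten formulas \eqref{MEtomGauss}--\eqref{MEtomWeingarten} and into \eqref{levicon_1_R42}--\eqref{levicon_2_R42} yields precisely \eqref{SurfR42cProp1Eq2a}--\eqref{SurfR42cProp1Eq2c}. In particular $\nabla_{e_1}e_1=0$, so $\widetilde\nabla_{e_1}e_1=0$ and the integral curves of $e_1$ are geodesics of both $M$ and $R^4_2(c)$.

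Second I would construct the coordinates. Since $e_1$ is a local unit vector field, the flow of $e_1$ yields a chart $(s,t)$ around $p$ in which $e_1=\partial/\partial s$ and the slice $\{s=0\}$ is the integral curve of $e_2$ through $p$. Then $g(\partial_s,\partial_s)=g(e_1,e_1)=\varepsilon$, and since $\nabla_{\partial_s}\partial_s=\nabla_{e_1}e_1=0$ and $\nabla$ is torsion-free with $[\partial_s,\partial_t]=0$, one has $\partial_s\,g(\partial_s,\partial_t)=g(\partial_s,\nabla_{\partial_s}\partial_t)=g(\partial_s,\nabla_{\partial_t}\partial_s)=\tfrac12\partial_t\,g(e_1,e_1)=0$; thus $g(\partial_s,\partial_t)$ is independent of $s$, and it vanishes on $\{s=0\}$ because there $\partial_t$ is proportional to $e_2\perp e_1$. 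Hence $g(\partial_s,\partial_t)\equiv 0$, so $\partial_t=\phi e_2$ for a function $\phi>0$, and $g(\partial_t,\partial_t)=-\varepsilon\phi^2$; this gives \eqref{SurfR42cProp1Eq1} and $e_2=\tfrac1\phi\,\partial_t$, so \eqref{SurfR42cProp1Eq2ALL} now reads as in the statement.

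Third I would derive and integrate the governing ODEs. From \eqref{SurfR42cProp1Eq2ALL}, $[e_1,e_2]=\nabla_{e_1}e_2-\nabla_{e_2}e_1=\omega e_2$, while $[\partial_s,\tfrac1\phi\partial_t]=-\tfrac{\phi_s}{\phi}\,e_2$, so $\omega=-\partial_s(\ln\phi)$. Taking the $e_1$-component of the Gauss equation \eqref{MinkGaussEquation} for $R(e_1,e_2)e_2$ and using $A^f_{e_3}=0$ gives $g(R(e_1,e_2)e_2,e_1)=-c$; expanding the left-hand side by means of \eqref{SurfR42cProp1Eq2ALL} yields $\partial_s\omega-\omega^2=\varepsilon c$, which in terms of $\phi$ is the linear equation $\partial_s^2\phi=-\varepsilon c\,\phi$. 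Its general solution, with $t$-dependent integration functions, is $\phi=A(t)(s+m(t))$ if $c=0$, $\phi=A(t)\cos(s+m(t))$ if $\varepsilon c=1$, and $\phi=A(t)\cosh(s+m(t))$ if $\varepsilon c=-1$, with $A>0$ possible by the positivity of $\phi$; the corresponding $\omega=-\partial_s\ln\phi$ are the ones listed. Finally, the Ricci equation \eqref{MinkRicciEquation} with $\xi=e_3$ and $A^f_{e_3}=0$ gives $R^\perp(e_1,e_2)e_3=0$; expanding it with $\nabla^\perp_{e_1}e_3=\omega e_3$, $\nabla^\perp_{e_2}e_3=\gamma e_3$ and $[e_1,e_2]=\omega e_2$ produces the first-order linear equation $\partial_s\gamma-\omega\gamma=\tfrac1\phi\,\partial_t\omega$, that is $\partial_s(\phi\gamma)=\partial_t\omega$; integrating in $s$ in each of the three cases, with $\gamma_0(t)$ the integration function, gives the formulas \eqref{SurfR42cProp1Eq3ALL_1}--\eqref{SurfR42cProp1Eq3ALL_3}.

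The mathematical content is modest; the main difficulty is bookkeeping. One must keep track of the pseudo--Riemannian signs throughout, since the index enters every identity via $g(e_1,e_1)=-g(e_2,e_2)=\varepsilon$ and via the light--like pairing $\tilde g(e_3,e_4)=-1$; one must fix the amplitude--phase normalisation when solving $\partial_s^2\phi=-\varepsilon c\,\phi$ (and verify $\phi>0$, which is what licenses choosing $A$ positive, smooth and non--vanishing); and the last step, integrating $\partial_s(\phi\gamma)=\partial_t\omega$, is the most laborious, the explicit primitives in the three cases having to be reorganised so as to match \eqref{SurfR42cProp1Eq3ALL_1}--\eqref{SurfR42cProp1Eq3ALL_3}.
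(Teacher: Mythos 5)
Your proposal is correct and follows essentially the same route as the paper: the Codazzi equation yields the frame relations \eqref{SurfR42cProp1Eq2ALL}, the commuting pair $e_1$, $\phi e_2$ (with $e_1(\phi)=-\phi\omega$) produces the chart and the metric \eqref{SurfR42cProp1Eq1}, and the Gauss and Ricci equations reduce to $\phi_s=-\phi\omega$, $\omega_s=\omega^2+\varepsilon c$, $\gamma_s=\omega\gamma+\omega_t/\phi$, which you integrate exactly as the authors do. The only point you leave untreated is the final clause that $A$ may be \emph{arbitrarily prescribed}: the paper obtains this by the reparametrisation $\tilde s=s$, $\tilde t=\int^t (A/\tilde A)\,dt$, which trades the $A$ produced by the integration for any other positive, smooth, non--vanishing $\tilde A$.
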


\begin{proof}
Suppose that $M$ is a quasi--minimal surface 
of a pseudo--Riemannian space forms $R^4_2(c)$ with 
positive relative nullity. 
From Lemma \ref{lemma1}, 
we choose a frame field $\{e_1, e_2, e_3, e_4\}$ which satisfy
the conditions given by \eqref{chosenbase}.
Considering this, we obtain $A^f_{e_3}=0$. 
Calculating the Codazzi equation \eqref{MinkCodazzi} for $X=Z=e_1$, $Y=e_2$ and $X=Z=e_2$, $Y=e_1$,
we get 
$\omega_{12}(e_1)=0\;\mbox{ and }\;
\varepsilon\omega_{12}(e_2)=\omega,$ respectively. 
Combining these equations with \eqref{levicon_1_R42} and \eqref{levicon_2_R42},  
we get the equations in \eqref{SurfR42cProp1Eq2ALL}.

On the other hand, equations \eqref{SurfR42cProp1Eq2a} and \eqref{SurfR42cProp1Eq2b} gives $[e_1,e_2]=\omega e_2$ which implies $[e_1,\phi e_2]=0$ for a non--vanishing function $\phi$ satisfying 
\begin{equation}\label{SurfR42cProp1ProofEq05}
 e_1(\phi)=-\phi\omega. 
\end{equation}
Therefore, there exists a local coordinate system $(s,t)$ such that $e_1=\frac{\partial}{\partial s}$ and  
$\phi e_2=\frac{\partial}{\partial t}$ defined on 
a neighborhood of any point $p\in M$. Consequently, the induced metric tensor $g$ of $M$ 
takes the form given in \eqref{SurfR42cProp1Eq1} and  
the equation \eqref{SurfR42cProp1ProofEq05} turns into
\begin{equation}
\label{SurfR42cProp1ProofEq06}
\phi_s=-\phi\omega. 
\end{equation}
From the Gauss equation \eqref{MinkGaussEquation} and 
the Ricci equation \eqref{MinkRicciEquation},
we obtain
\begin{equation}\label{SurfR42cProp1ProofEq07}
\omega_s=\omega^2+\varepsilon c\;\;\mbox{and}\;\; 
\gamma_s=\omega\gamma+\frac{\omega_t}\phi.
\end{equation}
By  solving equations appearing in \eqref{SurfR42cProp1ProofEq06} and \eqref{SurfR42cProp1ProofEq07} for $c=0$, $\varepsilon c=1$ and $\varepsilon c=-1$, we get \eqref{SurfR42cProp1Eq3ALL_1}, \eqref{SurfR42cProp1Eq3ALL_2} and \eqref{SurfR42cProp1Eq3ALL_3}, respectively, for some smooth functions 
$\gamma_0,m$ and $A$. 
Note that if $\tilde A$ is another smooth, positive, non--vanishing function, then the local coordinate system $(\tilde s, \tilde t)$ defined by 
$\tilde s=s,\ \tilde t=\int_{c}^t\frac{A}{\tilde A}dt$ 
satisfies all conditions of the lemma for $A=\tilde A$. 
Hence, the function $A$ appearing in \eqref{SurfR42cProp1Eq3ALL_1}, \eqref{SurfR42cProp1Eq3ALL_2} and \eqref{SurfR42cProp1Eq3ALL_3} 
can be chosen ar\-bit\-rarily.
\end{proof}

\subsection{Quasi--Minimal Surfaces in $\mathbb E^4_2$}
In this subsection,  
we get the following local classification theorem
for a quasi--minimal surface with positive relative nullity 
in $\mathbb{E}^4_2$.
\begin{thm}
\label{PRNE42ClassThm} 
A quasi--minimal surface
in $\mathbb E^4_2$ has positive relative nullity 
if and only if
it is congruent to one of the followings:
\begin{enumerate}
\item[(i)] A surface given by
\begin{align}
\label{PRNE42Example1YV}
\begin{split}
f(s,t)=&\left(b(t)s+\int _{t_0}^tm(\xi)b'(\xi)d\xi,s\sinh t+\int _{t_0}^tm(\xi) \cosh\xi d\xi, \right.\\
&\left.s\cosh t+\int _{t_0}^tm(\xi) \sinh\xi d\xi,b(t)s+\int _{t_0}^tm(\xi)b'(\xi)d\xi\right),
\end{split}
\end{align}
\item[(ii)] A surface given by
\begin{align}
\label{PRNE42Example2YV}
\begin{split}
f(s,t)=&\left(b(t)s+\int _{t_0}^tm(\xi)b'(\xi)d\xi,s\cosh t+\int _{t_0}^tm(\xi) \sinh\xi d\xi, \right.\\
&\left.s\sinh t+\int _{t_0}^tm(\xi) \cosh\xi d\xi,b(t)s+\int _{t_0}^tm(\xi)b'(\xi)d\xi\right),
\end{split}
\end{align}
\end{enumerate}
where $m(t)$ is a smooth function and the function $b(t)$ is a solution of $b''(t)-b(t)=F(t)$ 
for a non--vanishing smooth function $F(t)$.
\end{thm}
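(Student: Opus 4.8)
The statement is an equivalence, and the plan is to prove the two implications separately: the sufficiency (``if'') by a direct computation, and the necessity (``only if'') by integrating the structure equations of Proposition~\ref{SurfR42cProp1} in the flat case $c=0$. The whole argument is local, on a neighborhood on which the coordinates of Proposition~\ref{SurfR42cProp1} are defined (so $\phi=s+m\neq 0$ there).

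For the necessity, I would start from Proposition~\ref{SurfR42cProp1} with $c=0$ and, invoking the remark at the end of its proof, normalize the free function to $A\equiv 1$, so that $\phi=s+m(t)$, $\omega=-(s+m)^{-1}$, $e_1=\partial_s$, $e_2=\frac{1}{\phi}\partial_t$, and \eqref{SurfR42cProp1Eq3ALL_1} holds in these simplified forms. Since $\widetilde\nabla$ is now ordinary differentiation of $\mathbb R^4$-valued maps, \eqref{SurfR42cProp1Eq2a} reads $f_{ss}=0$, hence $f(s,t)=s\,a(t)+\beta(t)$; imposing $\widetilde\nabla_{e_1}e_2=0$ together with $e_2=\frac{1}{\phi}f_t$ forces $\beta'=m\,a'$, and after absorbing a constant by an ambient translation this gives $f(s,t)=s\,a(t)+\int_{t_0}^t m(\xi)\,a'(\xi)\,d\xi$ and $e_2=a'(t)$. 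The frame relations in \eqref{chosenbase} become $\langle a,a\rangle=\varepsilon$, $\langle a',a'\rangle=-\varepsilon$ (whence also $\langle a,a'\rangle=0$ and $\langle a,a''\rangle=\varepsilon$), and \eqref{SurfR42cProp1Eq2b} identifies $e_3=(s+m)^{-1}(a''-a)$. Writing $w:=a''-a$, light-likeness of $e_3$ gives $\langle w,w\rangle=0$, i.e.\ $\langle a'',a''\rangle=\varepsilon$, and $w$ is nowhere zero since $\alpha_f(e_2,e_2)=e_3\neq 0$. Finally, feeding $e_3=w/(s+m)$ into \eqref{SurfR42cProp1Eq2c} and comparing powers of $s+m$ reduces it to the condition that $w'$ is pointwise proportional to $w$, hence $w(t)=h(t)\,w_0$ for a fixed nonzero null vector $w_0$ and a nowhere-vanishing scalar $h$; and from $\langle a,a''\rangle=\varepsilon=\langle a,a\rangle$ with $a''=a+h\,w_0$ one gets $\langle a,w_0\rangle=0$.

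It then remains to integrate the linear equation $a''-a=h\,w_0$. After a congruence I may assume $w_0=(1,0,0,1)$, using that the isometry group of $\mathbb E^4_2$ acts transitively on nonzero null vectors; then $\langle a,w_0\rangle=0$ forces the first and fourth components of $a$ to coincide, say $a^1=a^4=b$ with $b''-b=h=:F$ nowhere zero, while the second and third components satisfy $y''=y$ and so have the form $p\cosh t+q\sinh t$ and $r\cosh t+u\sinh t$. Imposing $\langle a,a\rangle=\varepsilon$ (with $\langle a',a'\rangle=-\varepsilon$ then automatic) pins the constants down to $r^2-p^2=\varepsilon$, $u^2-q^2=-\varepsilon$, $ru=pq$; a hyperbolic-trigonometric parametrization of this system, combined with a translation in $t$ and suitable single-coordinate reflections (all isometries or reparametrizations), normalizes $(a^2,a^3)$ to $(\sinh t,\cosh t)$ when $\varepsilon=1$ and to $(\cosh t,\sinh t)$ when $\varepsilon=-1$. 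Substituting back into $f=s\,a(t)+\int_{t_0}^t m(\xi)a'(\xi)\,d\xi$ then produces exactly \eqref{PRNE42Example1YV} and \eqref{PRNE42Example2YV}, respectively.

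For the sufficiency, I would take $f$ of the form \eqref{PRNE42Example1YV} or \eqref{PRNE42Example2YV}, read off $a(t)$ and $a'(t)$, check $\langle a,a\rangle=-\langle a',a'\rangle=\pm 1$ so that $f$ is an immersion wherever $s+m\neq 0$, note $f_{ss}=0$ so that $\partial_s$ lies in the relative null space (hence positive relative nullity), and compute that the normal component of $\widetilde\nabla_{e_2}e_2=a''/(s+m)$ equals $(b''-b)(1,0,0,1)/(s+m)$, so that $H$ is a nonzero multiple of the null vector $(1,0,0,1)$ precisely because $F=b''-b$ never vanishes; thus $M$ is quasi-minimal. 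The step I expect to be the main obstacle is the bookkeeping in the integration: untangling the constraint set $\{\,r^2-p^2=\varepsilon,\ u^2-q^2=-\varepsilon,\ ru=pq\,\}$ and verifying that the residual gauge freedom --- the $O(1,1)$-action in the $(x_2,x_3)$-plane, translations of $t$, coordinate reflections, and the isometry normalizing $w_0$ --- collapses every solution onto exactly the two listed families, with no spurious extra cases and no loss of generality.
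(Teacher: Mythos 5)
Your proposal is correct and follows essentially the same route as the paper's proof: integrate the frame equations of Proposition~\ref{SurfR42cProp1} with $c=0$ and $A\equiv 1$ to get $f=s\,a(t)+\int m\,a'$, identify $e_3=(s+m)^{-1}(a''-a)$ as a function times a fixed null vector via \eqref{SurfR42cProp1Eq2c}, reduce to the linear ODE $a''-a=F\,C_0$, and normalize by an ambient isometry. The only difference is cosmetic --- the paper writes the general solution as $\cosh t\,C_1+\sinh t\,C_2+b(t)C_0$ and normalizes the frame $\{C_0,C_1,C_2\}$ from its Gram matrix in one step, whereas you first fix $C_0=(1,0,0,1)$ and then untangle the component constraints $r^2-p^2=\varepsilon$, $u^2-q^2=-\varepsilon$, $ru=pq$ by hand; both lead to the same two families.
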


\begin{proof}
Let $M$ be a quasi--minimal surface 
of $\mathbb{E}^4_2$ with positive relative nullity
and consider a local coordinate system 
$(s,t)$ satisfying the conditions given in Proposition \ref{SurfR42cProp1}. 
Without loss of generality, we take $A(t)=1$.

By considering the first equation in \eqref{SurfR42cProp1Eq2a}, we obtain
\begin{equation}
\label{PRNE42ClassThmProofEq01} 
f(s,t)=sB(t)+B_1(t)
\end{equation}
for some smooth $\mathbb R^4$--valued functions 
$B(t)$ and $B_1(t)$.  
Also, 
the equations \eqref{SurfR42cProp1Eq2c} and \eqref{SurfR42cProp1Eq3ALL_1} implies
\begin{equation}
\label{PRNE42ClassThmProofEq03} 
e_3=\frac{F(t)}{s+m(t)}C_0
\end{equation}
for a non--zero constant vector $C_0$, 
where $F(t)$ is a smooth non--vanishing function defined by 
$F(t)=e^{\int_{t_0}^t \gamma_0(\xi)d\xi}$.

On the other hand, 
the second equation in \eqref{SurfR42cProp1Eq2a} gives 
\begin{equation}
\label{PRNE42ClassThmProofEq02} 
B_1'(t)=m(t)B'(t).
\end{equation}
Considering \eqref{SurfR42cProp1Eq1}, \eqref{PRNE42ClassThmProofEq01} and \eqref{PRNE42ClassThmProofEq02}, we get
$e_2=B'(t)$ which implies $\tilde g(B'(t),B'(t))=-\varepsilon$
and   
combining \eqref{SurfR42cProp1Eq3ALL_1}, \eqref{PRNE42ClassThmProofEq03} and  \eqref{PRNE42ClassThmProofEq02} 
with the second equation in \eqref{SurfR42cProp1Eq2b}, 
we get 
\begin{equation}
\nonumber 
B''(t)-B(t)=F(t)C_0.
\end{equation}
The solution of this equation is 
\begin{equation}\label{PRNE42ClassThmProofEq04b} 
B(t)=\cosh t C_1+\sinh t C_2+b(t)C_0
\end{equation} 
for some constant vectors $C_1, C_2\in \mathbb{E}^4_2$, where  
the function $b(t)$ satisfies $b''(t)-b(t)=F(t)$.
By combining \eqref{PRNE42ClassThmProofEq02},
and \eqref{PRNE42ClassThmProofEq04b} with \eqref{PRNE42ClassThmProofEq01}, we obtain
\begin{align}
\label{PRNE42ClassThmProofEq01Re} 
\begin{split}
f(s,t)=&\left(s\cosh t + \int _{t_0}^tm(\xi)\sinh\xi d\xi\right)C_1\\
&+\left(s\sinh t + \int _{t_0}^tm(\xi)\cosh\xi d\xi\right)C_2\\
&+\left(sb(t)+ \int _{t_0}^tm(\xi)b'(\xi)d\xi\right)C_0.
\end{split}
\end{align} 
Since 
$g(e_1,e_1)=\varepsilon$ and $\tilde{g}(e_1,e_3)=\tilde{g}(e_3,e_3)=0$, the equations \eqref{PRNE42ClassThmProofEq03} and \eqref{PRNE42ClassThmProofEq01Re} give
\begin{align}\nonumber 
\begin{split} 
\tilde{g}(C_0,C_1)=\tilde{g}(C_0,C_2)=0,\quad & \tilde{g}(C_0,C_0)=0,\\
\tilde g(C_1,C_1)=-\tilde g(C_2,C_2)=\varepsilon,\quad &\tilde g(C_1,C_2)=0.  
\end{split}
\end{align}
Therefore, up to a suitable isometry of $\mathbb E^4_2$, 
one can choose $C_0=(1,0,0,1)$, $C_1=(0,0,1,0)$, $C_2=(0,1,0,0)$  for the case $\varepsilon=1$. 
In this case, the equation \eqref{PRNE42ClassThmProofEq01Re} turns into \eqref{PRNE42Example1YV} which gives the case (i) of the theorem. 
For $\varepsilon=-1$, up to a suitable isometry of $\mathbb E^4_2$, 
we  choose $C_0=(1,0,0,1)$, $C_1=(0,1,0,0)$, 
$C_2=(0,0,1,0)$. Then, we obtain the case (ii) of the theorem. 
Hence, the necessary condition is proved.

Conversely, it can be shown that both of the isometric immersions given
by \eqref{PRNE42Example1YV} and \eqref{PRNE42Example2YV}
satisfy $\alpha_f(\partial_s,\partial_s)=\alpha_f(\partial_s,\partial_t)=0$ and
$$\quad \alpha_f(\partial_t,\partial_t)=(s+m(t))F(t)(1,0,0,1)\neq 0.$$ 
Hence, the surfaces given in the theorem are  quasi--minimal and they have positive relative nullity. 
\end{proof}

\subsection{Quasi--Minimal Surfaces in $\mathbb S^4_2$}
In this subsection, we classify quasi--minimal 
surfaces of $\mathbb{S}^4_2$ with positive relative nullity,
that is the case $c=1$.
First, we want to present some examples of quasi--minimal surfaces
in $\mathbb S^4_2$.
 
\begin{ex}
\label{PRNS42Example1}
Consider the immersion $f:\Omega\hookrightarrow\mathbb S^4_2$ defined by
\begin{align}
\label{PRNS42Example1YV}
(i\circ f)(s,t)
=\left(b(t)\cos s, \cos s \sinh t,\sin s, \cos s \cosh t,b(t)\cos s\right)
\end{align}
for a smooth function $b(t)$ and put $M=f(\Omega)$, where $\Omega=(0,2\pi)\times \mathbb R$. 
Then, the mean curvature vector of $f$ is
\begin{equation*} 
i_*H=\frac{b''(t)-b(t)}{\cos s}(1,0,0,0,1)
\end{equation*}
which implies that 
$M$ is a quasi--minimal surface of $\mathbb{S}^4_2$
if and only if $b''(t)-b(t)\neq 0$ for all $t\in M$. 
A direct computation yields that $M$ has positive relative nullity. 
\end{ex}

Similar as Example \ref{PRNS42Example1}, we give the following example.

\begin{ex}
\label{PRNS42Example3}
Similar to Example \ref{PRNS42Example1}, the surface $M$ given by 
\begin{align}
\label{PRNS42Example3YV}
(i\circ f)(s,t)=\left(b(t)\cosh s, \sinh{s}, \cosh s\cos t,\cosh s\sin t, b(t)\cosh s\right)
\end{align}
is  quasi--minimal if and only if $b''(t)+b(t)\neq0$ for all $t\in M$. Moreover, $M$ has positive relative nullity. 
\end{ex}

\begin{prop}
\label{PRNS42Example2}
Let $\alpha:(I,-dt^2)\hookrightarrow\mathbb{S}^2_1$
be a time--like curve pa\-ra\-met\-ri\-zed by its arc--length 
with a non--vanishing curvature $\kappa$ and the unit normal vector 
field $N$. Then,
the surface $M$ in $\mathbb{S}^4_2$, defined by 
the immersion $f:J\times I\hookrightarrow\mathbb S^4_2$
given by 
\begin{align}
\label{PRNS42Example2YV}
\begin{split}
\hat{f}(s,t)=&\cos{s}(b(t),\alpha_1(t),\alpha_2(t),\alpha_3(t), b(t))\\
&+\sin{s}\left(\int_{t_0}^t\kappa(\xi) b'(\xi)d\xi, N_{1}(t), N_{2}(t), N_{3}(t), \int_{t_0}^t\kappa(\xi) b'(\xi)d\xi\right)
\end{split}
\end{align} 
is a quasi--minimal surface with positive relative nullity 
if and only if 
$b$ is a smooth function satisfying the condition 
\begin{equation}\label{PRNS42Example2Eq1} 
b''(t)-\kappa(t)\int _{t_0}^t\kappa(\xi)b'(\xi)d\xi-b(t)\neq0
\end{equation} 
for all $t\in I$, where $j\circ\alpha=(\alpha_1,\alpha_2,\alpha_3)$,
$j_*N=(N_1,N_2,N_3)$ and $j:\mathbb S^2_1\subset\mathbb E^3_1$ is the inclusion.
\end{prop}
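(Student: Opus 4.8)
The plan is to verify the proposition by a direct computation in the flat ambient space $\mathbb{E}^5_2$, using the inclusion $i\colon\mathbb{S}^4_2\subset\mathbb{E}^5_2$ together with the relations of Section 2 between $\hat\nabla$, $\widetilde\nabla$ and between $\alpha_{\hat f}$ and $\alpha_f$; no ingredient beyond those is needed. First I would record the structure equations of the time-like unit-speed curve $\alpha$ in $\mathbb{S}^2_1$: with $T=\alpha'$ and the unit space-like normal $N$ determined by $\nabla^{\mathbb{S}^2_1}_{T}T=\kappa N$, the Gauss formula for $j\colon\mathbb{S}^2_1\subset\mathbb{E}^3_1$ (the analogue of \eqref{nablaSrelatedby}) gives, componentwise in $\mathbb{E}^3_1$,
\[
\alpha''=\kappa N+\alpha,\qquad N'=\kappa\,\alpha'.
\]
Then I would write the immersion as $\hat f(s,t)=\cos s\,P(t)+\sin s\,Q(t)$ with $P=b\,C_0+(0,j\circ\alpha)$, $Q=\bigl(\int_{t_0}^{t}\kappa b'\,d\xi\bigr)C_0+(0,j_*N)$ and $C_0=(1,0,0,0,1)$, where $(0,v)$ denotes the vector of $\mathbb{E}^5_2$ whose three middle coordinates are those of $v\in\mathbb{E}^3_1$ and whose first and last coordinates vanish; note $C_0$ is light-like and $\widetilde g$-orthogonal to every such $(0,v)$.

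A short computation using that $\{\alpha,T,N\}$ is orthonormal then gives $\langle\hat f,\hat f\rangle=1$ (so $f$ indeed has values in $\mathbb{S}^4_2$) and, crucially, the identity $Q'=\kappa\,P'$; hence
\[
\hat f_{s}=-\sin s\,P+\cos s\,Q,\qquad \hat f_{t}=(\cos s+\kappa\sin s)\,P'=:\phi\,P'.
\]
From these one reads off the induced metric $g=ds\otimes ds-\phi^{2}\,dt\otimes dt$, so $M$ realizes the model of Proposition \ref{SurfR42cProp1} with $\varepsilon=1$, $e_1=\partial_s$, $e_2=\tfrac1\phi\partial_t$, wherever $\phi=\cos s+\kappa\sin s\neq0$ (which is exactly where $f$ is an immersion). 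Since $\mathbb{E}^5_2$ is flat, $\hat\nabla_{\partial_i}\partial_j$ is just the corresponding second partial derivative of $\hat f$; one gets $\hat f_{ss}=-\hat f$ and $\hat f_{st}$ proportional to $\hat f_t$, hence tangent to $M$, so from $\alpha_{\hat f}(X,Y)=i_*(\alpha_f(X,Y))-g(X,Y)\hat f$ we conclude $\alpha_f(\partial_s,\partial_s)=\alpha_f(\partial_s,\partial_t)=0$. In particular $\partial_s$ belongs to the relative null space at each point, so $M$ automatically has positive relative nullity once $f$ is an immersion.

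For the remaining component I would use the structure equations to write $P''=P+\kappa Q+\Lambda\,C_0$, where
\[
\Lambda(t)=b''(t)-\kappa(t)\int_{t_0}^{t}\kappa(\xi)b'(\xi)\,d\xi-b(t)
\]
is precisely the left-hand side of \eqref{PRNS42Example2Eq1}; expressing $P,Q$ back through $\hat f$ and $\hat f_s$ then gives $\hat f_{tt}=\phi^{2}\hat f+(\text{tangential terms})+\phi\Lambda\,C_0$, and since $g(\partial_t,\partial_t)=-\phi^2$ the relation between the two second fundamental forms yields $\alpha_f(\partial_t,\partial_t)=\phi\Lambda\,C_0$, with $C_0$ a light-like section of the normal bundle. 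Tracing gives $H=-\tfrac{\Lambda}{2\phi}\,C_0$, so $\langle H,H\rangle=0$ identically while $H$ vanishes exactly at the zeros of $\Lambda$. Hence $f$ is quasi-minimal if and only if $\Lambda$ is nowhere zero, i.e. exactly under condition \eqref{PRNS42Example2Eq1}; combined with the automatic positive relative nullity noted above, this proves the proposition. The points requiring care are the sign conventions in the Lorentzian structure equations for $\alpha$ and the repeated passages between $\alpha_{\hat f}$ and $\alpha_f$; the single simplification that keeps the computation short is the identity $Q'=\kappa\,P'$, which forces $\hat f_t$ to be a scalar multiple of the $s$-independent vector $P'$.
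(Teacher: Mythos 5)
Your proposal is correct and follows essentially the same route as the paper, which simply asserts the outcome of the same direct computation: checking that $\partial_s$ lies in the relative null space and that $\alpha_f(e_2,e_2)=-2H$ is the stated multiple of the light--like vector $(1,0,0,0,1)$ with $e_2=\frac{1}{\kappa\sin s+\cos s}\partial_t$. Your write--up merely makes explicit the steps the paper leaves implicit (the frame equations $\alpha''=\kappa N+\alpha$, $N'=\kappa\alpha'$ and the key identity $Q'=\kappa P'$), and the signs and conclusions all agree with the paper's formulas.
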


\begin{proof}
By a direct computation, 
we obtain 
$\widetilde\nabla_{e_1}e_1=\widetilde\nabla_{e_1}e_2=0$ 
which yields that $e_1{}_p\in\mathcal{N}_p$ for any $p\in M$, where $e_1=\partial_s$ and 
$e_2=\frac{1}{\kappa(t)\sin s+\cos s}\partial_t$. 
A further computation yields that
$$i_*\left(h(e_2,e_2)\right)=-2i_*H=\frac{b''(t)-\kappa(t)\int _{t_0}^t\kappa(\xi)b'(\xi)d\xi-b(t)}{\kappa(t)\sin s+\cos s} (1,0,0,0,1).$$ 
Hence, $M$ is quasi--minimal if and only if the condition given in \eqref{PRNS42Example2Eq1} is satisfied.
\end{proof}

\begin{prop}
\label{PRNS42Example4}
Let $\alpha:(I,dt^2)\hookrightarrow\mathbb{S}^2_1$
be a space--like curve pa\-ra\-met\-ri\-zed by its arc--length
with a non--vanishing curvature $\kappa$ and the unit normal vector 
field $N$. Then,
the surface $M$ in $\mathbb{S}^4_2$, defined by 
the immersion $f:J\times I\hookrightarrow\mathbb S^4_2$,
given by 
\begin{align}
\label{PRNS42Exampl42YV}
\begin{split}
\hat{f}(s,t)=&\cosh s\left(b(t),\alpha_1(t),\alpha_2(t),\alpha_3(t),b(t)\right)\\
&+\varepsilon\sinh s\Big(\int_{t_0}^t\kappa(\xi) b'(\xi)d\xi,N_1(t),N_2(t),N_3(t),\int_{t_0}^t\kappa(\xi) b'(\xi)d\xi\Big)
\end{split}
\end{align}
is a quasi--minimal surface with positive relative nullity 
if and only if 
$b$ is a smooth function satisfying the condition 
\begin{equation}\label{PRNS42Example4Eq1} 
b''(t)-\kappa(t)\int _{t_0}^t\kappa(\xi)b'(\xi)d\xi+b(t)\neq0.
\end{equation} 
for all $t\in I$, where $j\circ\alpha=(\alpha_1,\alpha_2,\alpha_3)$,
$j_*N=(N_1,N_2,N_3)$ and $j:\mathbb S^2_1\subset\mathbb E^3_1$ is the inclusion.
\end{prop}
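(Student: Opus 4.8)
The plan is to mirror, in the space--like setting, the computation indicated for Proposition~\ref{PRNS42Example2}. Write $\hat f=i\circ f$ as
$$\hat f(s,t)=\cosh s\,\beta(t)+\varepsilon\sinh s\,\delta(t),$$
with $\beta(t)=(b,\alpha_1,\alpha_2,\alpha_3,b)$ and $\delta(t)=\bigl(\int_{t_0}^t\kappa b'\,,\,N_1,N_2,N_3\,,\,\int_{t_0}^t\kappa b'\bigr)$, and first record the Frenet data of $\alpha$. Since $\mathbb S^2_1$ is Lorentzian and $T=\alpha'$ is space--like, the unit normal $N$ along $\alpha$ is \emph{time--like}, $\langle N,N\rangle=-1$; applying the analogue of \eqref{nablaSrelatedby} to $j:\mathbb S^2_1\subset\mathbb E^3_1$ then gives $T'=\kappa N-\alpha$ and $N'=\kappa T$ in $\mathbb E^3_1$. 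From these and the definitions of $\beta,\delta$ one obtains the algebraic identities
$$\langle\beta,\beta\rangle=1,\quad\langle\delta,\delta\rangle=-1,\quad\langle\beta,\delta\rangle=0,\quad\delta'=\kappa\,\beta',\quad\beta''=\kappa\,\delta-\beta+\lambda(t)\,\mathbf v,$$
where $\mathbf v=(1,0,0,0,1)$ is light--like and $\lambda(t)=b''(t)-\kappa(t)\int_{t_0}^t\kappa(\xi)b'(\xi)\,d\xi+b(t)$ is exactly the left--hand side of \eqref{PRNS42Example4Eq1}. The first three identities give $\hat f(s,t)\in\mathbb S^4_2$, and a short check shows $\mathbf v$ is light--like and normal to $M$.

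Next I would differentiate: $\hat f_s=\sinh s\,\beta+\varepsilon\cosh s\,\delta$, and, using $\delta'=\kappa\beta'$, $\hat f_t=(\cosh s+\varepsilon\kappa\sinh s)\,\beta'$. So I set $e_1=\partial_s$ and $e_2=(\cosh s+\varepsilon\kappa\sinh s)^{-1}\partial_t$ (shrinking $J$ so that $\cosh s+\varepsilon\kappa\sinh s>0$); then $i_*e_2=\beta'$, and the identities give $g(e_1,e_1)=-1$, $g(e_2,e_2)=1$, $g(e_1,e_2)=0$. Since $\hat f_{ss}=\hat f$, formula \eqref{nablaSrelatedby} together with $g(\partial_s,\partial_s)=-1$ forces $\widetilde\nabla_{e_1}e_1=0$; likewise $\hat f_{st}=(\sinh s+\varepsilon\kappa\cosh s)\,i_*e_2$ and $g(\partial_s,\partial_t)=0$ show that $\widetilde\nabla_{e_1}e_2$ is tangent to $M$. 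Hence $\alpha_f(e_1,e_1)=\alpha_f(e_1,e_2)=0$, i.e. $e_1{}_p\in\mathcal N_p$ for every $p$, so $M$ automatically has positive relative nullity.

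Finally, for the mean curvature I would compute $\hat f_{tt}=\varepsilon\kappa'\sinh s\,\beta'+(\cosh s+\varepsilon\kappa\sinh s)\beta''$, substitute $\beta''=\kappa\delta-\beta+\lambda\mathbf v$, and apply \eqref{nablaSrelatedby} with $g(\partial_t,\partial_t)=(\cosh s+\varepsilon\kappa\sinh s)^2$. The content of the computation is that, after rewriting things in terms of $\hat f=\cosh s\,\beta+\varepsilon\sinh s\,\delta$ and $\hat f_s=\sinh s\,\beta+\varepsilon\cosh s\,\delta$, every term except the $\mathbf v$--term recombines into a multiple of $e_1$ or of $\beta'=i_*e_2$ and so is tangent to $M$, leaving $\alpha_f(\partial_t,\partial_t)=(\cosh s+\varepsilon\kappa\sinh s)\,\lambda(t)\,\mathbf v$; since $\alpha_f(e_1,e_1)=0$ this yields
$$i_*H=\tfrac12\,i_*\alpha_f(e_2,e_2)=\frac{b''(t)-\kappa(t)\int_{t_0}^t\kappa(\xi)b'(\xi)\,d\xi+b(t)}{2\bigl(\cosh s+\varepsilon\kappa\sinh s\bigr)}\,(1,0,0,0,1).$$
As $(1,0,0,0,1)$ is light--like and normal to $M$, the vector $H$ is light--like at $(s,t)$ exactly when $\lambda(t)\neq0$; hence $M$ is quasi--minimal if and only if \eqref{PRNS42Example4Eq1} holds for all $t\in I$, and in that case it has positive relative nullity by the previous step. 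I expect the main obstacle to be the bookkeeping in this last step — checking that the $\beta,\delta,\beta'$ contributions to $\hat f_{tt}+g(\partial_t,\partial_t)\hat f$ collapse precisely to tangential terms plus the single light--like term — together with the preliminary point of choosing $J$ so that $\cosh s+\varepsilon\kappa\sinh s$ never vanishes.
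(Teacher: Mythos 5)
Your proposal is correct and follows essentially the same route as the paper's (very terse) proof: the paper likewise sets $e_1=\partial_s$, $e_2=(\cosh s+\varepsilon\kappa\sinh s)^{-1}\partial_t$, verifies $\widetilde\nabla_{e_1}e_1=\widetilde\nabla_{e_1}e_2=0$ to get $e_1{}_p\in\mathcal N_p$, and reduces quasi--minimality to the formula $2i_*H=i_*\alpha_f(e_2,e_2)=\lambda(t)(\cosh s+\varepsilon\kappa\sinh s)^{-1}(1,0,0,0,1)$, which is exactly what your computation yields. Your write-up simply supplies the Frenet identities $\alpha''=\kappa N-\alpha$, $N'=\kappa\alpha'$ and the decomposition $\beta''=\kappa\delta-\beta+\lambda\mathbf v$ that the paper leaves implicit.
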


\begin{proof}
Similar as the proof of Proposition \eqref{PRNS42Example2}, 
it can be seen that $M$ has positive relative nullity and 
$$i_*\left(h(e_2,e_2)\right)=2i_*H=\frac{b''(t)-\kappa(t)\int _{t_0}^t\kappa(\xi)b'(\xi)d\xi+b(t)}{\cosh s+\varepsilon\kappa(t)\sinh s} (1,0,0,0,1)$$ 
gives that 
$M$ is a quasi--minimal surface in $\mathbb{S}^4_2$ if and only if
the condition given by the equation \eqref{PRNS42Example4Eq1} is valid. 
\end{proof}

Now, we are ready to prove the following local classification theorem.

\begin{thm}
\label{PRNES42ClassThm}
A quasi--minimal surface in $\mathbb S^4_2$
has positive relative nullity if and only if 
it is congruent to one of the followings:
\begin{enumerate}
\item[(i)] A surface given by \eqref{PRNS42Example1YV}.
\item[(ii)] A surface described in Proposition \ref{PRNS42Example2}.
\item[(iii)] A surface given by \eqref{PRNS42Example3YV}.
\item[(iv)] A surface described in Proposition \ref{PRNS42Example4}.
\end{enumerate}
\end{thm}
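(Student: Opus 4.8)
The plan is to argue, exactly as in the proof of Theorem~\ref{PRNE42ClassThm}, by first reducing an arbitrary quasi--minimal surface $M$ with positive relative nullity in $\mathbb S^4_2$ to a normal form and then matching it with one of the four listed models. Concretely, I would invoke Proposition~\ref{SurfR42cProp1} with $c=1$: this supplies a coordinate system $(s,t)$, a frame $\{e_1,e_2,e_3,e_4\}$ satisfying \eqref{chosenbase}, and the structure equations \eqref{SurfR42cProp1Eq2ALL} together with the explicit formulas \eqref{SurfR42cProp1Eq3ALL_2} (for $\varepsilon c=1$, i.e. $\varepsilon=1$) and \eqref{SurfR42cProp1Eq3ALL_3} (for $\varepsilon c=-1$, i.e. $\varepsilon=-1$). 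Passing to the inclusion $\hat f=i\circ f$ into $\mathbb E^5_2$ and using \eqref{nablaSrelatedby}, the equations \eqref{SurfR42cProp1Eq2a}--\eqref{SurfR42cProp1Eq2c} translate into $\hat\nabla$--equations for $\hat f$ exactly as in the $\mathbb E^4_2$ case, but now with the extra term $-g(X,Y)\hat f$; in particular $\hat\nabla_{e_1}e_1 = -\varepsilon\,\hat f$.

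The second step is to integrate these ODEs in $s$ for fixed $t$. For $\varepsilon=1$ the equation $\hat\nabla_{e_1}e_1=-\hat f$, i.e. $\hat f_{ss}=-\hat f$ (since $e_1=\partial_s$), has general solution $\hat f(s,t)=\cos s\,P(t)+\sin s\,Q(t)$ for $\mathbb E^5_2$--valued functions $P,Q$; for $\varepsilon=-1$ one gets $\hat f_{ss}=\hat f$ and hence $\hat f(s,t)=\cosh s\,P(t)+\sinh s\,Q(t)$. I would then feed this ansatz into the remaining structure equations: $\hat\nabla_{e_1}e_2=0$ pins down how $e_2=\tfrac1\phi\partial_t$ relates to $P',Q'$, the $e_3$--equations together with the explicit $\gamma_0$ and $\phi$ from \eqref{SurfR42cProp1Eq3ALL_2}/\eqref{SurfR42cProp1Eq3ALL_3} identify $e_3$ up to the coordinate functions, and $\hat\nabla_{e_2}e_2=-\omega e_1+e_3$ yields a second--order linear ODE in $t$ for $P$ (and $Q$) whose inhomogeneous term is a fixed vector times a scalar function of $t$. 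Solving that ODE produces the $\cos t$/$\sin t$ (resp. $\cosh t$/$\sinh t$) combinations and the function $b(t)$ subject to a condition $b''\pm\kappa\int\kappa b'\pm b\neq 0$. Finally the causal character relations forced by \eqref{chosenbase} — lengths and inner products of $P,Q$ and the curve data — fix the constant vectors up to an isometry of $\mathbb E^5_2$ preserving $\mathbb S^4_2$, and one reads off model (i), (ii), (iii) or (iv) depending on the sign $\varepsilon$ and on whether the auxiliary curve $\alpha$ in $\mathbb S^2_1$ is time--like or space--like (the degenerate subcase $\kappa\equiv 0$ giving the explicit surfaces \eqref{PRNS42Example1YV}, \eqref{PRNS42Example3YV}, the generic subcase giving Propositions~\ref{PRNS42Example2} and \ref{PRNS42Example4}). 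The converse direction is already supplied by Example~\ref{PRNS42Example1}, Example~\ref{PRNS42Example3}, Proposition~\ref{PRNS42Example2} and Proposition~\ref{PRNS42Example4}.

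The main obstacle I anticipate is the bookkeeping in the integration over $t$: unlike the flat case, here $\phi$ is $A(t)\cos(s+m(t))$ or $A(t)\cosh(s+m(t))$, so the $s$-- and $t$--dependence do not separate as cleanly, and one must track how the shift $m(t)$ interacts with the $\cos s,\sin s$ (resp. $\cosh s,\sinh s$) basis. The clean way around this is to first absorb $m(t)$ by the substitution $s\mapsto s+m(t)$ is \emph{not} a coordinate change of the required type, so instead I would keep $m(t)$ explicit and observe that the combination appearing in $\hat f$ reorganizes, via the angle--addition formulas, into $\cos s$ and $\sin s$ times $t$--dependent vectors — precisely the structure of \eqref{PRNS42Example2YV}. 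Identifying the curve $\alpha(t)=(b(t),\alpha_1,\alpha_2,\alpha_3,b(t))$ as (a lift of) an arc--length parametrized curve in $\mathbb S^2_1$ with curvature $\kappa$ related to $\gamma_0$, and $N$ as its unit normal, then requires checking that the $\mathbb E^3_1$--part $(\alpha_1,\alpha_2,\alpha_3)$ indeed lies on $\mathbb S^2_1$ and has unit speed; this follows from the length relations $\tilde g(e_2,e_2)=-\varepsilon$, $\tilde g(e_i,\hat f)=0$ and $\langle\hat f,\hat f\rangle=1$ restricted to the last three coordinates, together with $\hat f$ having vanishing first and fifth coordinates difference. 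Once this translation is in place, matching with models (i)--(iv) is immediate, and I would present the argument splitting on $\varepsilon=\pm1$ and within each case on $\kappa\equiv 0$ versus $\kappa\not\equiv 0$.
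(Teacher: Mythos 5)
Your proposal follows essentially the same route as the paper's proof: invoke Proposition~\ref{SurfR42cProp1}, lift to $\hat f=i\circ f$ via \eqref{nablaSrelatedby}, integrate $\hat f_{ss}=-\varepsilon\hat f$ to get the $\cos s/\sin s$ (resp.\ $\cosh s/\sinh s$) ansatz, and then split on $\varepsilon=\pm1$ and on whether the induced curvature $\kappa$ (equal to $-\tan m$ or $\tanh m$) vanishes, which matches the paper's subcases $\sin m=0$, $\cos m=0$, $\sin m\cos m\neq 0$ and $\sinh m=0$, $\sinh m\neq 0$. The only detail worth noting is that your dichotomy ``$\kappa\equiv 0$ versus $\kappa\not\equiv 0$'' should also explicitly absorb the subcase $\cos m(t)=0$ (where $B'=0$ instead of $B_1'=0$), which the paper treats separately and shows is congruent to case~(i).
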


\begin{proof}
In order to prove necessary condition, 
assume that $M$ is a quasi--minimal surface with positive relative 
nullity in $\mathbb{S}^4_2$. 
We choose a local coordinate system $(s,t)$ 
which satisfies the equations \eqref{SurfR42cProp1Eq1} and \eqref{SurfR42cProp1Eq2ALL} and define the tangent vector fields $e_1,e_2$ as in Proposition \ref{SurfR42cProp1}.
Using the equations \eqref{nablaSrelatedby} and \eqref{SurfR42cProp1Eq2ALL}, 
we have 
\begin{subequations}
\label{PRNES42ClassThmEq1}
\begin{eqnarray}
\label{PRNES42ClassThmEq1a}
\hat{\nabla}_{e_1}e_1=-\varepsilon \hat{f},
&\quad&\hat{\nabla}_{e_1}e_2=0\\
\label{PRNES42ClassThmEq1b}
\hat{\nabla}_{e_2}e_1=-\omega e_2,
&\quad&\hat{\nabla}_{e_2}e_2=-\omega e_1+e_3+\varepsilon \hat{f},\\
\label{PRNES42ClassThmEq1c}
\hat{\nabla}_{e_1}e_3=\omega e_3,
&\quad&\hat{\nabla}_{e_2}e_3=\gamma e_3.
\end{eqnarray}
\end{subequations}
We are going to study the cases $\varepsilon=1$ and $\varepsilon=-1$ separately. 

\textit{Case (1)} $\varepsilon=1$. Considering the equations \eqref{SurfR42cProp1Eq3ALL_2} and \eqref{PRNES42ClassThmEq1c}, we get 
\begin{equation}
\label{PRNES42ClassThmEq3}
e_3=\frac{F(t)}{\cos{(s+m(t))}}C_0
\end{equation}
where $C_0$ is a non--zero constant vector in $\mathbb{E}^5_2$ and 
$F(t)$ is a smooth function defined by 
$F(t)=e^{\int_{t_0}^t A(\xi)\gamma_0(\xi)d\xi}$. 
Also, $C_0$ is a light--like vector in $\mathbb{E}^5_2$ 
due to the fact that $\tilde{g}(e_3,e_3)=0$.  
Thus, up to isometries of $\mathbb{E}^5_2$, we can choose 
$C_0=(1,0,0,0,1)$.

On the other hand, the first equation in \eqref{PRNES42ClassThmEq1a} gives 
\begin{equation}
\label{PRNES42ClassThmEq2}
\hat{f}(s,t)=\cos{s}B(t)+\sin{s}B_1(t)
\end{equation}
for some smooth $\mathbb{R}^5$--valued functions  $B(t)$ and $B_1(t)$ which satisfy 
\begin{equation}
\label{PRNES42ClassThmEq4}
\sin{m(t)}B'(t)+\cos{m(t)}B_1'(t)=0
\end{equation}
because of the second one of \eqref{PRNES42ClassThmEq1a}. By considering \eqref{PRNES42ClassThmEq4}, we are going to consider subcases $\sin m(t)=0$,  $\cos m(t)=0$ and $\sin m(t) \cos m(t)\neq0$ separately.

\textit{Case(1a)}: $\sin{m(t)}=0$ on $M$. 
Thus, the equation \eqref{PRNES42ClassThmEq4} implies
\begin{equation} \label{PRNES42ClassThmEq8c}
B_1(t)=B_{10}
\end{equation}
for a non--zero constant vector in $B_{10}\in \mathbb{E}^5_2$. 
Without loss of generality, we put $A(t)=1$.  Then, the equation 
$\hat{g}(\hat{f},\hat{f})=1$  and $\tilde{g}(e_1,e_3)=0$ imply
\begin{align}
\begin{split} 
\label{PRNES42ClassThmEq8b}
\tilde g(B(t),B(t))=\tilde g(B_{10},B_{10})=&1,\\ 
\tilde g(B(t),B_{10})=\tilde g(B(t),C_0)=\tilde g(B_{10},C_0)=&0.
\end{split} 
\end{align}  

On the other hand, by combining the equations \eqref{PRNES42ClassThmEq3},  \eqref{PRNES42ClassThmEq2} and \eqref{PRNES42ClassThmEq8c} with the second equation in \eqref{PRNES42ClassThmEq1b}, 
we obtain
$B''(t)-B(t)=F(t)C_0$
which implies
\begin{equation}
\label{PRNES42ClassThmEq8}
B(t)=\cosh{t}C_1+\sinh{t}C_2+b(t)C_0
\end{equation}
for some constant vectors $C_1, C_2\in\mathbb{E}^5_2$ and a smooth  
function $b$ satisfying $b''(t)-b(t)=F(t)$. Because of \eqref{PRNES42ClassThmEq8b} and \eqref{PRNES42ClassThmEq8}, we have 
\begin{align}\label{PRNE42ClassThmEq10} 
\begin{split} 
g(C_1,C_1)=-g(C_2,C_2)=&1, \\
g(C_1,C_2)=g(C_1,C_0)=g(C_2,C_0)=&0,\\  
\end{split}
\end{align}
Therefore, up to a suitable isometry of $\mathbb S^4_2$, we can choose 
\begin{align}
\nonumber
B_{10}=(0,0,1,0,0),\;
C_1=(0,0,0,1,0),\;C_2=(0,1,0,0,0). 
\end{align}
Consequently, the equation \eqref{PRNES42ClassThmEq2} gives \eqref{PRNS42Example1YV}. 
Hence, we have the surface given in the case (i) of the theorem.

\textit{Case(1b)} $\cos{m(t)}=0$ on $M$. 
Similar to Case (1a), we obtain that $M$ is congruent to the surface 
given by $f(\Omega)$, where 
$f:\Omega\hookrightarrow\mathbb S^4_2$ is defined by  
\begin{equation*}
(i\circ f)(s,t)=(b(t)\sin s, \sin s \sinh t,\cos s, \sin s \cosh t,b(t)\sin s).
\end{equation*}
However, this surface is congruent to the surface given in the case (i)  of the theorem.

\textit{Case(1c)} $\sin{m(t)}\neq 0$ and $\cos{m(t)}\neq 0$ on an open subset $\mathcal O$ of  $M$. By shrinking $\mathcal O$ if necessary, we assume $\mathcal O=I\times J$ for some open intervals $I$ and $J$.
Without loss of generality, we choose $A(t)=\sec{m(t)}$.
In this case, the equation \eqref{PRNES42ClassThmEq4} implies 
$B'_1(t)=-\tan{m(t)}B'(t)$ and $B_1'$ does not vanish on $\mathcal O$.  
Then, we have 
\begin{equation}\label{PRNES42ClassThmEq8d}
e_2=B'(t)
\end{equation}
 which implies $g(e_2,e_2)=g(B'(t),B'(t))=-1$. By combining the equations \eqref{PRNES42ClassThmEq3},  \eqref{PRNES42ClassThmEq2} and \eqref{PRNES42ClassThmEq8d} with the second equation in \eqref{PRNES42ClassThmEq1b}, we obtain
\begin{equation}
\label{PRNES42ClassThmEq7_2}
B''(t)=B(t)- \tan m(t)B_1(t)+F(t) \sec m(t)C_0.
\end{equation}

On the other hand, since $\hat g(\hat f,e_3)=0$, \eqref{PRNES42ClassThmEq3} and \eqref{PRNES42ClassThmEq2}  imply
$\tilde g(B_1,C_0)=\tilde g(B,C_0)=0$ which give
\begin{subequations}\label{PRNES42ClassThmCaseacEq1ALL}
\begin{eqnarray}
\label{PRNES42ClassThmCaseacEq1a}B(t)&=&(b(t),\alpha_1(t),\alpha_2(t),\alpha_3(t), b(t)),\\
\label{PRNES42ClassThmCaseacEq1b}B_1(t)&=&(b_{10}(t), b_{11}(t), b_{12}(t), b_{13}(t), b_{10}(t)).
\end{eqnarray}
\end{subequations}
for some smooth functions $b_0$, $\alpha_i$ and $b_{1j}$. Next, we define the smooth curves $\alpha,\gamma:J\to \mathbb E^3_1$ by
\begin{equation}\label{PRNES42ClassThmalpha-gamma}
\alpha=(\alpha_1,\alpha_2,\alpha_3),\quad  \gamma=(b_{11}, b_{12}, b_{13}) 
\end{equation}
Then, from \eqref{PRNES42ClassThmEq7_2} we have
\begin{equation}
\label{PRNES42ClassThmEq7_5} \alpha''(t)=\alpha(t)- \tan m(t)\alpha_1(t).
\end{equation}

Next, by combining \eqref {PRNES42ClassThmEq2} with \eqref{SurfR42cProp1Eq1}, we obtain $g(B(t), B(t))=g(B_1(t),B_1(t))=1$ and 
$g(B'(t),B'(t))=-1$. We consider these equations and \eqref{PRNES42ClassThmCaseacEq1a} to obtain 
$\langle \alpha,\alpha\rangle=1$ and $\langle \alpha',\alpha'\rangle=1$ which yields that $\alpha$ is a time--like curve lying on $\mathbb S^2_1$ 
pa\-ra\-met\-ri\-zed by  its arc--length. Therefore, one can define (spherical) normal $N=(N_1,N_2,N_3)$ and (spherical) curvature $\kappa$ of $\alpha$ by
\begin{equation}
\label{PRNES42ClassThmEq7_3}
\alpha''=\kappa N+\alpha,\;\; N'=\kappa \alpha'.
\end{equation}
 By combining \eqref{PRNES42ClassThmEq7_5} and \eqref{PRNES42ClassThmEq7_3}, we obtain 
\begin{equation}
\label{PRNES42ClassThmCase1cEq3a}
N(t)=\gamma(t),\quad  \kappa(t)=-\tan m(t)
\end{equation}
and by considering this equation and \eqref{PRNES42ClassThmEq4}, from \eqref{PRNES42ClassThmCaseacEq1b}
we obtain 
\begin{equation}\label{PRNES42ClassThmCase1cEq3b}
b_{10}(t)=\int_{t_0}^t\kappa(\xi) b'(\xi)d\xi
\end{equation}
for a constant $t_0$. Note that $\kappa$ does not vanish on $J$ because of the second equation in \eqref{PRNES42ClassThmCase1cEq3a}. By combining \eqref{PRNES42ClassThmCaseacEq1ALL}, \eqref{PRNES42ClassThmCase1cEq3a} and \eqref{PRNES42ClassThmCase1cEq3b} with 
\eqref{PRNES42ClassThmEq2}, we obtain that $\mathcal O$ is congruent to the surface given
by \eqref{PRNS42Example2YV}. Therefore, we have the case (ii) of the theorem.
 
\textit{Case(2)} $\varepsilon=-1$. Similar to Case(1),  
considering the equations \eqref{SurfR42cProp1Eq3ALL_3} and \eqref{PRNES42ClassThmEq1c}, we get 
\begin{equation}
\label{PRNES42ClassThmEq13}
e_3=\frac{F(t)}{\cosh{(s+m(t))}}C_0
\end{equation}
for a light-like constant vector $C_0\in \mathbb{E}^5_2$, where  
$F(t)$ is a non--vanishing smooth function defined by 
$\displaystyle F(t)=e^{\int_{t_0}^tA(\xi)\gamma_0(\xi)d\xi}$.
Since $C_0$ is a light--like constant vector in $\mathbb{E}^5_2$, 
up to isometries 
of $\mathbb{E}^5_2$, we choose $C_0=(1,0,0,0,1)$.

On the other hand, the first equation in  \eqref{PRNES42ClassThmEq1a} 
gives
\begin{equation}
\label{PRNES42ClassThmEq11}
\hat{f}(s,t)=\cosh{s}B(t)+\sinh{s}B_1(t)
\end{equation}
for some smooth $\mathbb{R}^5$--valued functions $B(t)$ and $B_1(t)$.
Also, from the second  equation in \eqref{PRNES42ClassThmEq1a}, 
we obtain 
\begin{equation}
\label{PRNES42ClassThmEq12}
\sinh{m(t)}B'(t)=\cosh{m(t)}B'_1(t).
\end{equation} 
We are going to study the subcases $\sinh m(t)=0$ and $\sinh m(t)\neq 0$ seperately.

\textit{Case(2a)}  $\sinh{m(t)}=0$ on $M$. 
Then, the equation \eqref{PRNES42ClassThmEq12} implies
$B_1(t)=B_{10}$ for a non--zero constant vector $B_{10}\in\mathbb{E}^5_2$. 
Without loss of generality, we put $A(t)=1$. 
Then, by considering $\hat{g}(\hat{f},\hat{f})=1$ and 
$\tilde{g}(e_1, e_3)=0$, from \eqref{PRNES42ClassThmEq13}  and \eqref{PRNES42ClassThmEq11} we get 
\begin{align}
\label{PRNES42ClassThmCase2aEq1}
\begin{split}
\tilde g(B(t),B(t))=-\tilde g(B_{10},B_{10})=1, \quad  &\tilde g(B(t), B_{10})=0,\\
\tilde{g}(B(t),C_0)=\tilde{g}(B_{10}, C_0)=0.&
\end{split}
\end{align}

On the other hand,
equations \eqref{PRNES42ClassThmEq13} and \eqref{PRNES42ClassThmEq11} give 
$$ e_2=B'(t) \quad\mbox{ and }\quad e_3=\frac{F(t)}{\cosh{s}}C_0,$$ 
respectively. 
By combining these equations with the second equation in \eqref{PRNES42ClassThmEq1b}, 
we obtain 
\begin{equation}\nonumber
B''( t)+B( t)=F( t)C_0
\end{equation}
whose solution is
\begin{equation}
\label{PRNES42ClassThmEq16}
B( t)=\cos{ t}C_1+\sin{ t}C_2+b( t)C_{0}
\end{equation}
for some constant vectors in $C_1,C_2\in \mathbb{E}^5_2$, where  
$b$ is a smooth function satisfying 
$b''( t)+b( t)= F( t)$.
 By combining \eqref{PRNES42ClassThmCase2aEq1} and \eqref{PRNES42ClassThmEq16}, we get
\begin{align}\nonumber
\begin{split}
g(C_1,C_0)=g(C_2,C_0)=0 \quad &g(B_{10},C_1)=g(B_{10},C_2)=0\\
g(C_1,C_1)=g(C_2,C_2)=1, \quad &g(C_1, C_2)=0.
\end{split}
\end{align}
Up to isometries of $\mathbb S^4_2$, we choose
$$B_{10}=(0,1,0,0,0),\;\;
C_1=(0,0,1,0,0),\;\;
C_2=(0,0,0,1,0).$$
Then, the equation \eqref{PRNES42ClassThmEq11} becomes \eqref{PRNS42Example3YV}. Hence $M$ is congruent to the surface given in the case (iii) of the theorem.

\textit{Case(2b)}  $\sinh {m(t)}\neq 0$ on an open subset $\mathcal O$ of $M$. Similar to Case (1c), we put
$\mathcal O=I\times J$ and $A(t)=\sech{m(t)}$, where $I$ and $J$ are some open intervals. In this case, because of \eqref{PRNES42ClassThmEq12}, on $\mathcal O$ we have
\begin{equation}
\label{PRNES42ClassThmEq20}
B'_1(t)=\tanh{m(t)}B'(t).
\end{equation}
for vector--valued smooth functions $B(t)$ and $B_1(t)$.  
Consequently, we get $e_2=B'(t)$ and $g(e_2,e_2)=g(B'(t),B'(t))=1$.
Using this and the equation \eqref{PRNES42ClassThmEq13} 
in the second one of \eqref{PRNES42ClassThmEq1b}, 
we get 
\begin{equation}
\label{PRNES42ClassThmEq21}
B''(t)=-B(t)+\tanh{m(t)}B_1(t)+\sech{m(t)}F(t)C_0.
\end{equation}

Similar to Case(1c), $B$ and $B_1$ satisfy \eqref{PRNES42ClassThmCaseacEq1ALL} and we define curves $\alpha,\gamma:J\to \mathbb E^3_1$ as in \eqref{PRNES42ClassThmalpha-gamma}. Then, from \eqref{PRNES42ClassThmEq21} we have
\begin{equation}\label{PRNES42ClassThmEq21b} 
\alpha''(t)=-\alpha(t)+\tanh{m(t)}\gamma(t).
\end{equation}
Note that since $\varepsilon=-1$, \eqref{SurfR42cProp1Eq1} and \eqref{PRNES42ClassThmEq11} implies 
$g(B(t),B(t))=g(B'(t),B'(t))=1$ from which we have $\langle \alpha,\alpha\rangle=\langle \alpha',\alpha'\rangle=1$. 
Therefore, $\alpha$ is a space--like curve lying on $\mathbb{S}^2_1$ 
and it is parametrized by its arc--length. 
Similar to Case (1c), we define $\kappa$ and $N$ by
\begin{align}
\label{PRNES42ClassThmEq22}
\alpha''=\kappa N-\alpha,\quad N'=\kappa \alpha'.
\end{align} 
By combining \eqref{PRNES42ClassThmEq21b} and \eqref{PRNES42ClassThmEq22}, we obtain 
\begin{equation}
\label{PRNES42ClassThmCase2bEq1}
N(t)=\gamma(t),\quad  \kappa(t)=\tanh m(t).
\end{equation}
Since $\sinh {m(t)}\neq 0$ on  $\mathcal O$, $\kappa$ does not vanish on $J$. By considering \eqref{PRNES42ClassThmEq20} and \eqref{PRNES42ClassThmCase2bEq1}, we obtain 
\begin{equation}\label{PRNES42ClassThmCase2bEq2}
b_{10}(t)=\int_{t_0}^t\kappa(\xi) b'(\xi)d\xi
\end{equation}
for a constant $t_0$. By combining \eqref{PRNES42ClassThmCaseacEq1ALL}, \eqref{PRNES42ClassThmCase2bEq1} and \eqref{PRNES42ClassThmCase2bEq2} with \eqref{PRNES42ClassThmEq11}, we obtain that $\mathcal O$ is congruent to the surface given
by \eqref{PRNS42Exampl42YV}. Therefore, we have the case (iv) of the theorem. 
 
We have completed the proof of the necessary condition. Conversely, as we describe in Example \ref{PRNS42Example1}, Example \ref{PRNS42Example3}, Proposition \ref{PRNS42Example2}  and Proposition \ref{PRNS42Example4}, all of the surfaces given in the theorem are quasi--minimal and they have positive relative nullity. 
\end{proof}

%\section*{Acknowledgements}

\bibliographystyle{amsplain}

\end{document}